\documentclass[12 pt]{amsart}
\usepackage{amssymb}
  
% Fuzz and Page
\setlength{\textwidth}{6.5in}
\setlength{\oddsidemargin}{0in}
\setlength{\evensidemargin}{\oddsidemargin}
\hfuzz2pt % Don't bother to report over-full boxes if overage is < 2pt
\vfuzz1.5pt
                                                                                                                       
% MATH
\newcommand{\gothic}{\mathfrak}

\newcommand{\m}{{\gothic{m}}}

\newcommand{\Hom}{\operatorname{Hom{}}}

\newcommand{\Tor}{\operatorname{Tor{}}}

\renewcommand{\phi}{\varphi}

% THEOREMS
                                                                                                                       
\newtheorem{thm}{Theorem}

\newtheorem{prop}[thm]{Proposition}

\begin{document}
                                                                                                                       
\title{Ideals defining Gorenstein rings are (almost) never products}
                                                                                                                       
\author{Craig Huneke}
                                                                                                                       
\address{Department of Mathematics \\
        University of Kansas \\
        Lawrence, KS
        66045}
                                                                                                                       
\email{huneke@math.ku.edu}
                                                                                                                       
\urladdr{http://www.math.ku.edu/\textasciitilde huneke}
                                                                                                                       
\begin{abstract} This note proves that if $S$ is an unramified regular local ring and
$I,J$ proper ideals of height at least two, then $S/IJ$ is never Gorenstein.
\end{abstract}

\date{\today}

\keywords{regular ring, Gorenstein ring, unramified}
                                                                                   
\subjclass[2000]{ Primary
13A15, 13D07, 13H10}

\thanks{ The author gratefully acknowledges support on NSF grant
DMS-0244405. I also thank Bill Heinzer for correspondence concerning the
paper, and in particular for sending me the statement and argument of
Proposition 1.}
                                                                                                                       
\maketitle
                                                                                                                       
\bibliographystyle{amsplain}

This paper answers in the unramified case a question that was asked by Eisenbud and Herzog: can an
ideal in a regular local ring that defines a Gorenstein quotient ring ever be the product of two proper ideals, except
in the obvious case in which a principal ideal is a product? 

A related (unpublished) result was proved by W. Heinzer, D. Lantz and
K. Shah in the early 1990s. They proved that an ideal generated by
a system of parameters in a Gorenstein ring can never be the product of two proper ideals.
Explicitly, they prove the following:

\begin{prop} Let $(R,\m)$ be a Gorenstein local ring of
dimension at least two. 
Then an ideal I generated by a
system of parameters never the product of two proper ideals.
\end{prop}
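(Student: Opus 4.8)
The plan is to assume a factorization $I=JK$ with $J,K$ proper and derive a contradiction. Write $d=\dim R\ge 2$ and $I=(x_1,\dots,x_d)$ with $x_1,\dots,x_d$ a system of parameters. Since $R$ is Cohen--Macaulay these form a regular sequence, so $A:=R/I$ is a zero-dimensional Gorenstein ring; in particular its socle $(I:_R\m)/I$ is one-dimensional, say $(I:_R\m)=I+(\delta)$ with $\delta\notin I$. The preliminary reductions are routine. From $I\subseteq J$ we get $\height J\ge\height I=d$, while $J$ proper forces $J\subseteq\m$ and hence $\height J=d$; as $R$ is catenary this gives $\sqrt J=\m$, and likewise $\sqrt K=\m$, so $J$ and $K$ are $\m$-primary and contained in $\m$. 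In particular $I=JK\subseteq\m J\cap\m K\subseteq\m^2$. Also $I\subsetneq J$ and $I\subsetneq K$, since $I=JK$ with $J=I$ (or $K=I$) would give $I=I\m$ and hence $I=0$ by Nakayama.

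Next I would exploit that $A$ is Artinian and Gorenstein. The socle of such a ring is its unique minimal nonzero ideal, hence is contained in every nonzero ideal; applying this to $J/I$ and $K/I$ shows $\delta\in J\cap K$ (and $\delta\notin I$). Simultaneously, $JK\subseteq I$ gives $K\subseteq (I:_R J)$, and Matlis duality over the Artinian Gorenstein ring $A$ yields the length identity $\ell\bigl((I:_R J)/I\bigr)=\ell\bigl(0:_A(J/I)\bigr)=\ell\bigl(A/(J/I)\bigr)=\ell(R/J)$, so that $\ell(R/K)\ge\ell\bigl(R/(I:_R J)\bigr)=\ell(R/I)-\ell(R/J)$; that is,
\[
\ell(R/J)+\ell(R/K)\ \ge\ \ell(R/I),
\]
with equality precisely when $J$ and $K$ are linked through $I$.

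The decisive step is to contradict the package ``$I=JK$, with $I\subseteq\m^2$ minimally generated by exactly $d=\dim R$ elements forming a regular sequence, and $\delta\in(J\cap K)\setminus I$''. The route I would pursue is to prove $\mu(JK)\ge d+1$, which already contradicts $\mu(I)=d$. One has only the trivial \emph{upper} bound $\mu(I)=\mu(JK)\le\mu(J)\,\mu(K)$ from the surjection $(J/\m J)\otimes_k(K/\m K)\twoheadrightarrow JK/\m JK$, so what is needed is a genuine \emph{lower} bound on the number of generators of a product of two ideals lying in $\m$. I would try to extract the extra generator from $\delta$, using also that in $\operatorname{gr}_\m(R)$ the image of $JK$ in $\m^2/\m^3$ is the product of the images of $J$ and $K$ in $\m/\m^2$, and comparing the dimension of that product with the number of minimal generators of $I$ of order two. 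The main obstacle is precisely this generator count — producing the extra minimal generator of $JK$ beyond the $d=\dim R$ that a parameter ideal would be allowed — and carrying it out uniformly over all Gorenstein $R$, not merely the regular ones, where $\operatorname{gr}_\m(R)$ can fail to be a domain and orders of products may drop; reconciling that with the fact that $I$ is a regular sequence is where I expect the real difficulty to lie.
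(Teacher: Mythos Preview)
Your proposal is not a proof: you set up correct preliminaries (that $J,K$ are $\m$-primary, that $I\subseteq\m^2$, that the socle element lies in $J\cap K$, and the length inequality from Matlis duality) but then explicitly stop short of the decisive step, namely the lower bound $\mu(JK)\ge d+1$, which you yourself flag as the main obstacle. None of the preliminary facts is actually brought to bear on a contradiction, and the sketch involving $\operatorname{gr}_\m(R)$ is too vague to evaluate; as stated, establishing $\mu(JK)\ge d+1$ for arbitrary proper $J,K$ is essentially equivalent to the proposition itself.

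The paper's argument is entirely different and avoids any generator count. The key idea you are missing is a reduction on $J$ and $K$ themselves: after making the residue field infinite, replace $J$ and $K$ by minimal reductions, which are parameter ideals; since $I$ is integral over their product and a parameter ideal is not integral over any proper subideal, the equality $I=JK$ persists with the new $J,K$. Now $J$ is itself a parameter ideal in the Gorenstein ring $R$, so $R/J$ is Artinian Gorenstein and $J$ has a unique minimal overideal. Matlis duality in $R/I$ converts this into: $L/I:=(I:J)/I$ has a unique maximal proper subideal, hence is cyclic, say $L=aR+I$ with $a\in\m$. Then $I=JK\subseteq JL\subseteq I$ gives $I=JL=aJ+JI$, so $I=aJ$ by Nakayama. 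But then $I\subseteq(a)$, and Krull's principal ideal theorem forces $\dim R\le 1$, a contradiction. The crucial point is to exploit the Gorenstein property of $R/J$, not only of $R/I$; your setup never gains any structural control over $J$ individually, which is why the argument stalls.
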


\begin{proof} (Heinzer, Lantz, Shah) Without loss of generality we may
assume the residue
field of $R$ is infinite. Assume that  $I = JK$. We claim that we may assume $J$ and
$K$ are also generated by systems of parameters.
For let $J'$ and $K'$ be minimal reductions of $J$
and $K$. Since the residue field of $R$ is infinite, both $J'$ and $K'$ are generated
by systems of parameters. 
Then $I = JK$ is integral over $J'K'$ since they
have the same extension to any valuation
overring.  Since $I$ is not integral over any
proper subideal, we must have $I = J'K'$. So
we may assume $J$ and $K$ are generated by systems
of parameters. In particular, $J$ has a unique minimal
overideal. Hence by Matlis duality the
annihilator $L/I$ of $J/I$ has a unique minimal
under ideal in $R/I$. Thus $L/I = a(R/I)$ for any
$a \in L$ not in that underideal. Since $I = JK$
is contained in $JL$ which is contained in $I$,
we have $I = JL = J(aR + I) = aJ + JI$. But this
means $I = aJ$, so I is not $\m$-primary.
\end{proof}

Our main result is the following theorem.

\begin{thm} Let $(S,\m)$ be an unramified regular local ring, and let $I$ be an ideal of
height at least two. If $S/I$ is Gorenstein, then $I\ne JK$ for any two proper
ideals $J,K$.
\end{thm}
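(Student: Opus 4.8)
The plan is to split according to whether $\Tor_1^S(S/J,S/K)$ vanishes, using the unramifiedness of $S$ in the guise of Auslander--Lichtenbaum rigidity of $\Tor$ and of Serre's and Lichtenbaum's results on (partial) intersection multiplicities. Write $R=S/I$ and $c=\height I\ge 2$. Since $R$ is Gorenstein it is Cohen--Macaulay, hence equidimensional, so every minimal prime of $I$ has height $c$; as $I\subseteq J$ and $I\subseteq K$ this forces $\height J\ge c$ and $\height K\ge c$, so neither $J$ nor $K$ is principal, $\pd_S R=c$ by Auslander--Buchsbaum, and $\pd_S(S/J)\ge\operatorname{grade}J=\height J\ge c$, similarly for $K$. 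Note also $\Tor_1^S(S/J,S/K)=(J\cap K)/JK=(J\cap K)/I$.

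\emph{Case 1: $J\cap K=JK$, i.e.\ $\Tor_1^S(S/J,S/K)=0$.} Here rigidity enters: since $S$ is unramified, $\Tor_i^S(S/J,S/K)=0$ for all $i\ge 1$, so the tensor product over $S$ of the minimal free resolutions of $S/J$ and $S/K$ is a minimal free resolution of $S/J\otimes_S S/K=S/(J+K)$, whence $\pd_S S/(J+K)=\pd_S(S/J)+\pd_S(S/K)\ge 2c$. On the other hand $J\cap K=I$ yields the Mayer--Vietoris sequence $0\longrightarrow R\longrightarrow S/J\oplus S/K\longrightarrow S/(J+K)\longrightarrow 0$, and the bound $\pd C\le\max(\pd B,\pd A+1)$ for $0\to A\to B\to C\to 0$, applied with $\pd_S R=c$, gives $\pd_S S/(J+K)\le\max\bigl(\pd_S(S/J),\pd_S(S/K),c+1\bigr)$. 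Comparing the two estimates and using $\pd_S(S/J),\pd_S(S/K)\ge c\ge 2$ forces $c\le 1$, a contradiction. (This case needs no reduction and nothing about $R$ beyond $\pd_S R=c$.)

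\emph{Case 2: $\Tor_1^S(S/J,S/K)=(J\cap K)/I\ne 0$.} This is the hard case. First I would reduce to the Artinian situation: the nonzero module $(J\cap K)/I$ is a submodule of $R$ killed by $J+K$, since $(J\cap K)(J+K)\subseteq JK=I$; as the Gorenstein ring $R$ has no embedded primes, an associated prime $\p$ of $(J\cap K)/I$ is a minimal prime of $I$, hence of height $c$, with $\p\supseteq J+K$. Localizing at $\p$ (which preserves ``regular'', ``unramified'', ``Gorenstein quotient'', and $I=JK$) we may assume $S$ is an unramified regular local ring of dimension $c\ge 2$, $R=S/I$ is Artinian Gorenstein, and $J,K$ are proper --- necessarily $\m$-primary, as they contain $I$. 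Now argue numerically. Since $R$ is Artinian Gorenstein, $\Hom_R(-,R)$ is Matlis duality over $R$, and $K/I\subseteq(I:_S J)/I=\Hom_R(S/J,R)\cong(S/J)^\vee$ gives $\ell(K/I)\le\ell(S/J)$; hence, from $0\to K/I\to R\to S/K\to 0$, the colength inequality $\ell(S/I)\le\ell(S/J)+\ell(S/K)$. Feeding this into the Mayer--Vietoris sequence together with $0\to(J\cap K)/I\to R\to S/(J\cap K)\to 0$ yields $\ell\bigl(\Tor_1^S(S/J,S/K)\bigr)\le\ell\bigl(S/(J+K)\bigr)=\ell\bigl(\Tor_0^S(S/J,S/K)\bigr)$. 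Since $S/J\otimes_S S/K$ has finite length and $\dim S/J+\dim S/K=0<c$, Serre's theorem (valid because $S$ is unramified) gives $\chi^S(S/J,S/K)=0$, so the inequality just obtained says exactly that $\chi_2^S(S/J,S/K)=\sum_{i\ge 2}(-1)^i\ell\bigl(\Tor_i^S(S/J,S/K)\bigr)\le 0$. But by the positivity theorem for partial Euler characteristics over unramified regular local rings, $\chi_2\ge 0$, and $\chi_2=0$ forces $\Tor_2^S(S/J,S/K)=0$, whence by rigidity $\Tor_i^S(S/J,S/K)=0$ for all $i\ge 2$, in particular $\Tor_c^S(S/J,S/K)=0$ as $c\ge 2$. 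This is impossible: for nonzero finite-length modules $M,N$ over a regular local ring of dimension $c$ one always has $\Tor_c^S(M,N)\ne 0$ --- embed a copy of the residue field $k$ in $N$, use left-exactness of $\Tor_c^S(M,-)$, and note $\Tor_c^S(M,k)\cong F_c\otimes_S k\ne 0$ since the top differential of the minimal resolution $F_\bullet$ of $M$ has entries in $\m$.

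The main obstacle is Case 2: Case 1 is a clean projective-dimension count, but Case 2 requires extracting exactly the right numerical shadow of Gorensteinness (the colength inequality, via Matlis duality on the Artinian reduction) and then playing it against the full intersection-multiplicity machinery over unramified rings --- Serre's vanishing, Lichtenbaum's positivity of partial Euler characteristics, and Auslander--Lichtenbaum rigidity. I would also want to verify carefully that the localization in Case 2 is harmless and that $\pd_S(S/J)\ge\height J$ in Case 1, both routine but essential.
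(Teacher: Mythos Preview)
Your argument is correct and shares the paper's strategic skeleton: reduce to the Artinian situation, exploit Gorenstein duality to relate $\ell(\Tor_0)$ and $\ell(\Tor_1)$, apply Serre's vanishing of $\chi$ together with the Lichtenbaum--Hochster results on partial Euler characteristics to force $\Tor_i^S(S/J,S/K)=0$ for all $i\ge 2$, and then derive a contradiction from $\Tor_c\ne 0$ when $c\ge 2$. The tactical choices, however, differ in two places worth noting. First, for the Artinian reduction the paper passes to an infinite residue field and mods out by a general maximal $S/I$-regular sequence (chosen so that the quotient remains unramified), handling everything uniformly; you instead split off Case~1 with a self-contained projective-dimension count that uses only $\pd_S R=c$, and in Case~2 you localize at a minimal prime of $I$ containing $J+K$. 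Your route avoids the residue-field extension and the general-position argument, at the price of the case split; and yes, the localization is harmless, since in mixed characteristic $p$ unramifiedness of $S$ means $S/pS$ is regular, and regularity localizes. Second, for the duality step the paper first replaces $J,K$ by ideals maximal with $I=JK$, which gives $J=I:K$ and $K=I:J$ and hence the \emph{equality} $\ell(\Tor_0)=\ell(\Tor_1)$, so that $\chi_2=0$ drops out immediately from $\chi=0$; you keep the given $J,K$, obtain only the inequality $\ell(\Tor_1)\le\ell(\Tor_0)$, and then close the gap via nonnegativity of $\chi_2$. Both routes rest on the same deep inputs; the paper's maximality trick is a pleasant simplification you might adopt, while your Case~1 argument is of independent interest since it requires nothing about $S/I$ beyond Cohen--Macaulayness.
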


\begin{proof} By way of contradiction we assume that $I = JK$, where $J$ and $K$ are proper ideals.
We reduce to the case in which $I$ is $\m$-primary using a simple
reduction suggested by the referee, which is easier than what this author originally
did. First we make the residue field infinite in case it is not already infinite by
replacing $S$ by $S(t) = S[t]_{\m S[t]}$, where $t$ is a variable. This does not
change the assumption that $S$ is an unramified regular local ring, and $IS(t) = JS(t)\cdot KS(t)$. 
We will prove that the height of $IS(t)$ is one, which will be a contradiction.
Henceforth we assume that the residue field of $S$ is infinite.

We next reduce to the case that $I$ is $\m$-primary. 
Choose a maximal regular sequence $x_1,...,x_d$ on $S/I$
consisting of elements whose images in $\m/(\m^2 +I)$ are independent. We can furthermore
assume that if $S$ is mixed characteristic $p$, then $p\notin (x_1,...,x_d)  +  \m^2$ by
choosing $x_1,...,x_d$ generally enough, unless $\m = (x_1,...,x_d)$. In this latter
case, $I = 0$, a contradiction.
One can then replace $S$ by
$S' = S/(x_1,...,x_d)$ which is still an unramified regular local ring, and replace $I$ by
$I' = IS'$. In this case, $S'/I'$ is still Gorenstein, $I'$ is still a product, and $I'$ is
primary for the maximal ideal. Moreover the height of $I$ and the height of $I'$ are the same.

We have reduced to the case in which $I$ is $\m$-primary. Among all representations of $I$
as a product, $I = JK$ for two proper ideals $J$ and $K$, choose $J$ and $K$
each maximal with respect to this property. Since $I = JK\subseteq (I:K)K\subseteq I$,
the maximality shows that $I:K = J$ and similarly $I:J = K$. Set $R = S/I$. 
For an arbitrary $R$-module $M$, we let $\lambda(M)$ denote the length of $M$.
From duality, using the fact that $R$ is Gorenstein, we see that
$$
\lambda(R/(J+K)R) = \lambda(\Hom_R(R/(J+K)R,R)) = \lambda((0:(J+K)R)$$ 
 $$= \lambda((0:JR)\cap (0:KR))  = \lambda((K\cap J)R).
$$

Lifting these ideals back to $S$, we obtain that $\lambda(S/(J+K)) = \lambda((J\cap K)/JK)$,
i.e. that $$\lambda(\Tor_0^S(S/J,S/K)) = \lambda(\Tor_1^S(S/J,S/K)).$$
Since both of these modules have finite length, $\chi(S/J,S/K) = 0$, where
whenever $M$ and $N$ are finitely generated $S$-modules such that $M\otimes_SN$ has
finite length we define
$$\chi(M,N) = \sum_{i=0}^{\infty} (-1)^i\lambda(\Tor_i^S(M,N)).$$

Hence $$\sum_{i=2}^{\infty} (-1)^i\lambda(\Tor_i^S(S/J,S/K)) = 0.$$
By \cite{L} (see also \cite{H}), this forces $\Tor_i^S(S/J,S/K) = 0$ for all $i\geq 2$.
We claim this forces the dimension of $S$ to be at most one: let
$\bf F$ be the minimal free resolution of $S/J$. After tensoring with $S/K$, the last map in
the resulting complex can never be injective since every socle element of $S/K$ must go 
to zero. Since the last free module in the minimal resolution of $S/J$ occurs at
the dimension of $S$, the vanishing of all higher Tors past $2$ forces the dimension
to be one. Hence the height of $I$ is one, a contradiction. 
\end{proof}

Theorem 2 has several possible extensions. Of course, to begin with, one would like to know the
result in the ramified case. The most reasonable related question seems to be the
following:

\medskip

Question: \it Let $S$ be a regular local ring and $I$ an ideal of height $c$. Assume that
$I = JK$ for two proper ideals $J,K$, and that $S/I$ is Cohen-Macaulay. Then is the
type of $S/I$ at least $c$\rm?

\medskip

Perhaps there is an elementary argument that answers this question in the affirmative, which would simplify and
generalize the result above. For example, in the case $I$ is $\m$-primary and
$J = \m$, the result holds since the type of $S/I$ is just the socle dimension, and
if $I = \m K$, then $K/\m K$ is contained in the socle. Hence the type is at least
the minimal number of generators of $K$, which is at least the height of $I$ by
Krull's height theorem. At the very least this special case shows that some homological algebra
dealing with Krull's theorem might be needed.


\begin{thebibliography}{10}
\bibitem{A} 
M. Auslander,  \emph{Modules over unramified regular local rings},
Illinois J. Math.,
vol 5,  1961,  631--647. 

\bibitem{H}
M. Hochster, \emph{Euler characteristics over unramified regular local rings},
 Illinois J. Math.,  vol 28,  1984, 281--285.

\bibitem{L}
S. Lichtenbaum,  \emph{ On the vanishing of $\Tor$ in regular local rings},
Illinois J. Math., vol 10, 1966, 220--226. 

\bibitem{N} 
M. Nagata, \emph{Local Rings}, Kreiger Publishing Co., 1975, New York. 
\end{thebibliography}
\end{document}